\theoremstyle{plain}
\newtheorem{theorem}{Theorem}[section]
\newtheorem{proposition}[theorem]{Proposition}
\newtheorem{lemma}[theorem]{Lemma}
\theoremstyle{definition}
\newtheorem{definition}[theorem]{Definition}
\declaretheorem[sibling=theorem,name=Remark,qed={$\clubsuit$}]{remark}
\newtheorem{question}[theorem]{Question}
\renewcommand{\phi}{\varphi}
\renewcommand{\theta}{\vartheta}
\renewcommand{\epsilon}{\varepsilon}
\DeclareMathOperator{\PP}{\mathbb{P}}
\DeclareMathOperator{\calS}{\mathcal{S}}
\newcommand{\codim}{\mathrm{codim}}
\newcommand{\Gr}{\mathbb{G}\mathrm{r}}
\newcommand{\str}{\mathrm{str}}
\DeclareMathOperator{\Sing}{Sing}
\let\Vec\relax\DeclareMathOperator{\Vec}{Vec}
\DeclareMathOperator{\Set}{Set}
\DeclareMathOperator{\im}{im}
\DeclareMathOperator{\NN}{\mathbb{N}}
\DeclareMathOperator{\CC}{\mathbb{C}}
\date{}
\title{The set of forms with bounded strength is not closed}
\author{Edoardo Ballico}
\address[Edoardo Ballico, Alessandro Oneto]{Universit\`a di Trento, Via Sommarive, 14 - 38123 Povo (Trento), Italy}
\email{edoardo.ballico@unitn.it, alessandro.oneto@unitn.it}
\author{Arthur Bik}
\address[Arthur Bik]{MPI for Mathematics in the Sciences, Leipzig, Germany}
\email{arthur.bik@mis.mpg.de}
\author{Alessandro Oneto}
\address{\vspace*{-25pt}}
\author{Emanuele Ventura}
\address[Emanuele Ventura]{Universit\`{a} di Torino, Dipartimento di Matematica, via Carlo Alberto 10, 10123 Torino, Italy}
\email{emanuele.ventura@unito.it, emanueleventura.sw@gmail.com}
\subjclass[2020]{15A21,13A02,14R20}
\keywords{strength; additive decompositions; polynomial functors; homogeneous polynomials}
\begin{document}
\begin{abstract}
The strength of a homogeneous polynomial (or form) is the smallest length of an additive decomposition expressing it whose summands are reducible forms. Using polynomial functors, we show that the set of forms with bounded strength is not always Zariski-closed. More specifically, if the ground field is algebraically closed, we prove that the set of quartics with strength $\leq3$ is not Zariski-closed for a large number of variables.
\end{abstract}

\maketitle

\section{Introduction}

In \cite{ananyan2020small}, Ananyan and Hochster defined the notion of \textit{strength} of a polynomial to solve the famous conjecture by Stillman on the existence of a uniform bound, independent on the number of variables, for the projective dimension of a homogeneous ideal of a polynomial ring. 
Interestingly, well before this groundbreaking work in commutative algebra, Schmidt \cite[p. 245]{S85} had introduced the very same measure of complexity (called {\it Schmidt rank}) of a polynomial in the context of arithmetic geometry to study integer points in varieties defined over the rationals. We shall use the terminology
of Ananyan and Hochster. \vspace{-2pt}

Let $\calS =  \bigoplus_{d\geq 0}\calS_d$ be a standard graded polynomial ring in $n$ variables, i.e., $\calS_d$ is the vector space of degree-$d$ homogeneous polynomials, or forms, in $n$ variables with coefficients in a field $\Bbbk$. \vspace{-2pt}

\begin{definition}\label{def:strength}
A \textbf{strength decomposition} of a homogeneous polynomial $f$ is an expression of the form
\begin{equation}\label{eq:strength_decomp}
f = g_1h_1 + \ldots + g_rh_r, \quad \text{ where $g_i,h_i$ are homogeneous with } 1\leq \deg(g_i),\deg(h_i) \leq \deg(f)-1.
\end{equation}
We define the \textbf{strength} of $f$ to be the smallest length of a strength decomposition of $f$. Note that this differs from the definition used in \cite{ananyan2020small} by 1, so that a homogeneous polynomial has strength $\leq r$ if and only if it is the sum of $r$ polynomials of strength $\leq 1$.
\end{definition}\vspace{-2pt}

Since its introduction, the notion of strength has been studied in several works, see e.g. \cite{derksen2017topological, kazhdan2018ranks, bik2019polynomials, erman2019big, erman2020big, ananyan2020strength, ballico2020strength, bik2020strength, ballico2021strength, KP0, KP1, KP2} (in the last three works, strength is called Schmidt rank). Despite this interest, our knowledge of  the strength of forms is still quite limited.\vspace{-2pt}

A special type of strength decomposition is the one where each summand in \eqref{eq:strength_decomp} is required to have a linear factor;  such decompositions are called \textbf{slice decompositions}. The smallest length of a slice decomposition of a form $f$ is called its \textbf{slice rank}, or \textbf{qrank} \cite{derksen2017topological} when $f$ has degree $3$. It is well-known that a form $f$ admits a slice decomposition of length $r$ if and only if the hypersurface $\lbrace f = 0\rbrace$ contains a linear subspace of codimension $r$; see for instance \cite[Proposition 2.2]{derksen2017topological}. Hence, in the projective space $\PP\calS_d$ of degree-$d$ homogeneous polynomials, the set of forms having slice rank $\leq r$ is the image of the projection onto the first factor of an incidence variety inside $\PP\calS_d \times \Gr(n-r,n)$, where $\Gr(k,n)$ is the Grassmannian of $k$-dimensional linear spaces in $\Bbbk^n$; see \cite[Example 12.5]{harris1992algebraic}. Thus it is Zariski-closed.\vspace{-2pt}
	
It is natural to ask whether the same holds for sets of forms of bounded strength.\vspace{-2pt}

\begin{question}[{\cite[Example 2]{bik2019polynomials}}]\label{quest}
In the projective space $\PP\calS_d$ of degree-$d$ homogeneous polynomials in~$n$ variables, is the set of polynomials having strength $\leq r$ always Zariski-closed?
\end{question}

In this note, we give a negative answer to this question. Note that, since for quadrics and cubics the notion of strength coincides with that of slice rank, the smallest degree where Question \ref{quest} might have a negative answer is $4$. This is indeed the case: we prove that, for quartics in sufficiently many variables, the space of forms with strength~$\leq3$ is {\it not} closed. Employing the theory of polynomial functors \cite{draisma2019topological, bik2019polynomials, bik2020phd}, in Section \ref{sec:example} we show the following.

\begin{theorem}
Let $\Bbbk$ be algebraically closed. For polynomials $x,y,u,v$ of degree $1$ and $f,g,p,q$ of degree $2$, the polynomial
\begin{equation}\label{counterexample}
x^2f+y^2g+u^2p+v^2q, 
\end{equation}
is always a limit of strength-$\leq3$ polynomials, but for a sufficiently large number of variables and a suitable choice of $x,y,u,v,f,g,p,q$ it has strength $4$.
\end{theorem}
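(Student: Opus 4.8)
The statement has two independent halves: an \emph{upper bound in the limit} (every quartic of this shape lies in the closure of the strength-$\leq 3$ locus) and a \emph{lower bound for one member} (for large $n$ and a good choice, one such quartic has strength exactly $4$). I would treat them separately.

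For the limit, the plan is to write down, for arbitrary $x,y,u,v\in\calS_1$ and $f,g,p,q\in\calS_2$, an explicit one-parameter family $Q_t$ with $\str(Q_t)\leq 3$ and $\lim_{t\to 0}Q_t=x^2f+y^2g+u^2p+v^2q$. The mechanism is that the ``multiplication'' map sending a triple of pairs of complementary-degree forms to the sum of their products -- whose image is exactly the strength-$\leq 3$ locus -- is essentially a linear projection and hence not proper: its kernel, the space of syzygies $\sum_i A_iB_i=0$, meets the (closed, irreducible) locus of low-rank inputs, and it is precisely this that creates boundary points. Concretely I would look for forms $A_i(t),B_i(t)$ ($i=1,2,3$) of complementary degrees such that $\sum_i A_i(t)B_i(t)$ vanishes to some order $\geq 1$ at $t=0$ while its first non-vanishing Taylor coefficient, suitably rescaled, equals $x^2f+y^2g+u^2p+v^2q$; for a first-order family this reduces to the conditions $\sum_i A_i(0)B_i(0)=0$ and ``$Q$ lies in the degree-$4$ part of the ideal generated by the $A_i(0),B_i(0)$'', with a higher-order family relaxing the second condition by finitely many extra products coming from higher Taylor coefficients. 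I would try to arrange this from the Koszul syzygies among $x^2,y^2,u^2,v^2$ (e.g.\ $x^2\cdot y^2-y^2\cdot x^2=0$) together with the freedom to take one term with a linear$\times$cubic split, so that two of the four ``square'' contributions cancel against a degenerating pair while the other two surface at first order. Checking that such a family can \emph{always} be produced -- so that the cancellations yield a genuine boundary point, not a trivially strength-$\leq 2$ or $\leq 3$ quartic -- is the delicate part of this half.

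For the lower bound I would pass to infinitely many variables and polynomial functors: the quartics $x^2f+y^2g+u^2p+v^2q$ form the image of an irreducible polynomial sub-functor $\mathbf X$ of $\mathrm{Sym}^4$, and the strength-$\leq 3$ quartics form the image $\mathbf Y$ of the multiplication morphism above; if $\mathbf X\subseteq\mathbf Y$ then, by irreducibility, $\mathbf X$ lies inside a single degree-split component of $\mathbf Y$, and I would exclude each. The $(1,3)+(1,3)+(1,3)$ component is the slice-rank-$\leq 3$ locus, which is Zariski-closed, so there it suffices to show a generic $x^2f+y^2g+u^2p+v^2q$ has slice rank $>3$, i.e.\ $V(Q)$ contains no linear subspace of codimension $3$; I would get this from a dimension count on the incidence variety $\{(Q,L):L\subseteq V(Q),\ \codim L=3\}$ over $\mathbf X$ (using e.g.\ $Q|_{V(x,y,u)}=v^2q\neq 0$ for the ``obvious'' candidates). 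For the components containing a quadric$\times$quadric term I would specialize $f,g,p,q$ to generic quadrics in a disjoint variable set $z_1,\dots,z_m$ and analyse the bidegree-$(2,2)$ part of a hypothetical decomposition $Q=\sum_i A_iB_i$ inside $\mathrm{Sym}^2\langle x,y,u,v\rangle\otimes\mathrm{Sym}^2\langle z\rangle$, extracting constraints on the bigraded pieces of the $A_i,B_i$ that become incompatible once $m$, hence $n$, is large enough; the output is the explicit threshold on $n$ in the theorem. An alternative, and probably more robust, route is to argue directly at the level of $\GL_\infty$-varieties that $\mathbf Y$ is too structured to contain a generic point of $\mathbf X$.

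The step I expect to be the main obstacle is the lower bound. The cheap invariants all fail to distinguish the two loci: if $Q$ has strength $\leq 3$ then its Jacobian ideal sits inside an ideal with $\leq 6$ generators, so $\codim\Sing V(Q)\leq 6$, and the Hessian of $Q$ at a point of $\Sing V(Q)$ is a sum of three rank-$\leq 2$ quadratic forms, so has rank $\leq 6$ -- but for $x^2f+y^2g+u^2p+v^2q$ the singular locus has codimension $4$ and the Hessian along it has rank $4$, and $4\leq 6$ yields no contradiction. Hence a genuinely finer argument -- the careful bigraded/tensorial analysis, or the polynomial-functor dimension/structure estimate -- is needed to separate $\mathbf X$ from $\mathbf Y$, whereas the limit half should come down to exhibiting one well-chosen degenerating family and a routine, if fiddly, verification.
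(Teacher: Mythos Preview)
Your two-part structure matches the paper's exactly, but in each half the paper takes a shorter path than your primary plan.

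\textbf{The limit.} You describe the correct first-order mechanism (a length-$3$ syzygy at $t=0$ whose first-order correction is the target quartic) but do not exhibit the family; the paper simply writes one down. For $t\neq 0$,
\[
h(t):=\frac{1}{t}\Bigl((x^2+tg)(y^2+tf)-(u^2-tq)(v^2-tp)-(xy+uv)(xy-uv)\Bigr)
\]
is a sum of three $(2,2)$-products, and expanding gives $h(t)\to x^2f+y^2g+u^2p+v^2q$ as $t\to 0$. No linear$\times$cubic term is needed; the relevant syzygy at $t=0$ is the difference-of-squares identity $x^2y^2-u^2v^2=(xy+uv)(xy-uv)$, not a Koszul relation of the type you suggest. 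So the half you flag as ``delicate'' is in fact a one-line verification once the right family is found.

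\textbf{The lower bound.} The paper follows precisely what you list as the ``alternative, probably more robust'' route. It views the strength-$\leq 3$ locus as the union of images of four polynomial transformations $\alpha_1,\dots,\alpha_4$ (one for each degree-split type) and the quartics in question as the image of $\beta\colon(x,y,u,v,f,g,p,q)\mapsto x^2f+y^2g+u^2p+v^2q$, and shows (via an inverse-limit/system-of-variables argument, their Proposition~2.14 and Lemma~2.15) that $\im(\beta_{\Bbbk^n})\not\subseteq\bigcup_i\im((\alpha_i)_{\Bbbk^n})$ for $n\gg 0$ as soon as $\beta$ does not factor through any $\alpha_i$ as a polynomial transformation. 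That last condition is equivalent to the statement that
\[
x^2f+y^2g+u^2p+v^2q\in\Bbbk[x,y,u,v,f,g,p,q],\qquad \deg x=\cdots=\deg v=1,\ \deg f=\cdots=\deg q=2,
\]
has strength $4$ in this weighted polynomial ring, and the paper proves this (their Theorem~3.2) by a direct, elementary case analysis of the four degree-splits $(1,3)^3$, $(1,3)^2(2,2)$, $(1,3)(2,2)^2$, $(2,2)^3$, using only Krull's height theorem and unique factorisation---considerably cleaner than the incidence-variety and bigraded-tensor analysis in your primary plan. Note also that, contrary to your expectation, this route yields \emph{no} explicit threshold on $n$; the theorem as stated only asserts $n\gg 0$, and the paper leaves the minimal $n$ as an open question. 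Your observation that the singular-locus and Hessian invariants are too coarse is correct and is also recorded in the paper.
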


This shows that the answer to Question~\ref{quest} is negative for $d=4$, $r=3$ and $n\gg0$. We leave the question concerning the minimal $n$ where this is possible open. Note that over $\CC$ it has to be at least $6$: indeed, the strength of a form is bounded above by its slice rank and the slice rank of a degree-$4$ form in $n\leq 5$ variables is at most $3$; see e.g. \cite{bik2020strength}.

\begin{question}\label{quest1}
What is the smallest number of variables $n$ where \eqref{counterexample} can have strength $4$? Does the sum of squares $q = x^2s^2 + y^2t^2 + u^2w^2 + v^2z^2$ in $8$ variables possess this property?
\end{question}

The degree $d = 4$ of our counterexample is minimal. However, we do not know whether the strength $r = 3$ is minimal as well. This leads to the following question.

\begin{question}\label{quest2}
In the projective space $\PP\calS_d$ of degree-$d$ homogeneous polynomials in~$n$ variables, is the set of polynomials having strength $\leq 2$ always Zariski-closed?
\end{question}

\subsection*{Acknowledgements}
We thank Jan Draisma and Rob Eggermont for useful discussions. E.B. and A.O. are partially supported by MIUR and GNSAGA of INdAM (Italy). During the preparation of this manuscript, E.V. was a postdoc at  Universit\"at Bern supported by Vici Grant 639.033.514 of Jan Draisma from the  Netherlands Organisation for Scientific Research. We thank the anonymous referee for their helpful comments which also improved the presentation of the article.  \vspace{-2pt}

\section{Polynomial Functors}\label{sec:polyfunctor}

We now collect the basic notions from the theory of \textit{polynomial functors} of finite degree that we shall need for our example. We restrict our attention to polynomial functors whose elements are tuples of polynomials. We refer to \cite{draisma2019topological,bikgeometry,bik2020phd} for more details on the general theory of polynomial functors. Let $\Vec$ be the category of finite-dimensional vector spaces over a field $\Bbbk$ and let $d\geq1$ be an integer.\vspace{-2pt}

\begin{definition}
The polynomial functor $S^d\colon\Vec\to\Vec$ is the functor that assigns to a finite-dimensional vector space $V\in\Vec$ its $d$-th symmetric power $S^d(V)\in\Vec$ and to a linear map $L\colon V\to W$ the linear map $S^d(L)\colon S^d(V)\to S^d(W)$ it induces.
\end{definition}\vspace{-2pt}

Let {$x_1,\ldots,x_n$} be a basis of $V\in\Vec$. Then $S^d(V)=\Bbbk[x_1,\ldots,x_n]_d$ is the space of degree-$d$ forms. So we say that homogeneous polynomials of degree $d$ are the elements of $S^d$.\vspace{-2pt}

\begin{definition}
Let $P,Q\colon\Vec\to\Vec$ be polynomial functors. Then their direct sum $P\oplus Q\colon\Vec\to\Vec$ is the polynomial functor that assigns to a finite-dimensional vector space $V\in\Vec$ the space $P(V)\oplus Q(V)$ and to a linear map $L\colon V\to W$ the linear map $P(L)\oplus Q(L)\colon P(V)\oplus Q(V)\to P(W)\oplus Q(W)$ sending $(v_1,v_2)\mapsto (P(L)(v_1),Q(L)(v_2))$.
\end{definition}\vspace{-2pt}

So, for all integers $d_1,\ldots,d_k\geq1$, we get a polynomial functor $S^{d_1}\oplus\cdots\oplus S^{d_k}$.\vspace{-2pt}

\begin{definition}
A \textbf{polynomial transformation} $\alpha\colon S^{d_1}\oplus\cdots\oplus S^{d_k}\to S^{e_1}\oplus\cdots\oplus S^{e_\ell}$ consists of a map 
\begin{eqnarray*}
\alpha_V\colon S^{d_1}(V)\oplus\cdots\oplus S^{d_k}(V)&\to& S^{e_1}(V)\oplus\cdots\oplus S^{e_\ell}(V)\\
(f_1,\ldots,f_k)&\mapsto&(F_1(f_1,\ldots,f_k),\ldots,F_\ell(f_1,\ldots,f_k))
\end{eqnarray*}
for every $V\in\Vec$. Here $F_1,\ldots,F_\ell\in \Bbbk[X_1,\ldots,X_k]$ are fixed forms with $\deg(F_j)=e_j$ where $\deg(X_i)=d_i$. We denote by $\im(\alpha)$ the functor $\Vec\to\Set$ that assigns $V\in\Vec$ to the set $\im(\alpha_V)$.
\end{definition}

\begin{remark}\label{remark:polytrans_onetoone}
For fixed $d_1,\ldots,d_k,e_1,\ldots,e_\ell\geq1$, the map that sends such a tuple of polynomials $(F_1,\ldots,F_\ell)$ to the polynomial transformation $S^{d_1}\oplus\cdots\oplus S^{d_k}\to S^{e_1}\oplus\cdots\oplus S^{e_\ell}$ defines a bijection.
\end{remark}

\begin{remark}\label{remark:polytrans_composition}
Given polynomial transformations 
\[
\alpha\colon S^{c_1}\oplus\cdots\oplus S^{c_h}\to S^{d_1}\oplus\cdots\oplus S^{d_k}\mbox{ and }\beta\colon S^{d_1}\oplus\cdots\oplus S^{d_k}\to S^{e_1}\oplus\cdots\oplus S^{e_\ell}
\]
defined by polynomials $F_1,\ldots,F_k$ and $G_1,\ldots,G_\ell$, respectively, note that the composition
\[
\beta\circ\alpha\colon S^{c_1}\oplus\cdots\oplus S^{c_h}\to S^{e_1}\oplus\cdots\oplus S^{e_\ell}
\]
is defined by the polynomials $G_1(F_1,\ldots,F_k),\ldots,G_\ell(F_1,\ldots,F_k)$.
\end{remark}

\begin{definition}
Let $P$ be a polynomial functor. Then its inverse limit is
\[
P_{\infty}:={\textstyle \varprojlim_n} P(\Bbbk^n)=\left\{(f_n)_n\in\prod_{n=1}^\infty P(\Bbbk^n)\,\middle|\, P(\pi_n)(f_{n+1})=f_n\mbox{ for all }n\geq 1\right\}
\]
where $\pi_n\colon \Bbbk^{n+1}\to \Bbbk^n$ is the projection map that forgets the last coordinate.
\end{definition}

Let $d\geq1$ be an integer. Then the elements of $S^d_{\infty}$ are polynomial series
\[
f={\textstyle \sum_{\underline{e}}c_{\underline{e}}}x^{\underline{e}},\quad c_{\underline{e}}\in \Bbbk,
\]
where $\underline{e}=(e_1,e_2,\ldots)$ ranges over all sequences of nonnegative integers that sum up to $d$ and $x^{\underline{e}}=x_1^{e_1}x_2^{e_2}\cdots$. The set $\Bbbk\oplus \bigoplus_{e=1}^\infty S^e_{\infty}$ naturally has the structure of a graded $\Bbbk$-algebra. The following theorem of Erman, Sam and Snowden tells us that this $\Bbbk$-algebra is in fact a polynomial ring.

\begin{theorem} \cite[Theorem 1.1]{erman2020big}
There exists an index set $I$ and a map $d\colon I\to\NN$ such that $\Bbbk\oplus \bigoplus_{e=1}^\infty S^e_{\infty}$ is isomorphic as a graded $\Bbbk$-algebra to the polynomial ring $\Bbbk[y_i\mid i\in I]$ where $\deg(y_i)=d(i)$.
\end{theorem}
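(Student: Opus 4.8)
The plan is to treat $A:=\Bbbk\oplus\bigoplus_{e\ge1}S^e_\infty$ as a connected graded $\Bbbk$-algebra with irrelevant ideal $\mathfrak m:=\bigoplus_{e\ge1}S^e_\infty$ and to construct the isomorphism by the usual minimal-generators recipe; the one genuine subtlety is that $A$ is hugely non-Noetherian, already $A_1=S^1_\infty\cong\prod_{i\ge1}\Bbbk$. First I would describe the space of indecomposables $W:=\mathfrak m/\mathfrak m^2$ in each degree,
\[
W_e\;=\;S^e_\infty\Big/\!\!\!\sum_{\substack{a+b=e\\a,b\ge1}}\!\!S^a_\infty\cdot S^b_\infty,\qquad\text{so that }W_1=S^1_\infty,
\]
then fix an index set $I$, a weight function $d\colon I\to\NN$, and homogeneous lifts $y_i\in\mathfrak m_{d(i)}$ whose classes form a $\Bbbk$-basis of $W$. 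This produces a graded $\Bbbk$-algebra homomorphism $\phi\colon\Bbbk[y_i\mid i\in I]\to A$, and the theorem is precisely the assertion that, for a suitable choice of the lifts, $\phi$ is an isomorphism.

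Surjectivity of $\phi$ holds for \emph{any} choice of lifts, by graded Nakayama: inducting on the degree $e$, an arbitrary $z\in A_e$ differs, modulo $(\mathfrak m^2)_e=\sum_{a+b=e}S^a_\infty S^b_\infty$, from a $\Bbbk$-linear combination of the $y_i$ with $d(i)=e$, and every summand of $(\mathfrak m^2)_e$ lives in degrees $a,b<e$, hence in $\im \phi$ by the inductive hypothesis; since all $y_i$ have positive degree the induction closes. Two further facts will be needed. First, $A$ is an integral domain: if $f=(f^{(n)})_n$ and $g=(g^{(n)})_n$ are nonzero homogeneous elements with $f^{(n)},g^{(n)}\in\Bbbk[x_1,\dots,x_n]$, then nonvanishing propagates along the transition maps $\pi_n$ (which set $x_{n+1}=0$), so $f^{(n)},g^{(n)}\ne0$ for $n\gg0$ and hence $(fg)^{(n)}=f^{(n)}g^{(n)}\ne0$ in the domain $\Bbbk[x_1,\dots,x_n]$; in particular $\ker\phi$ is a homogeneous prime. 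Second, since the transition maps are surjective, each projection $\rho_n\colon A\twoheadrightarrow\Bbbk[x_1,\dots,x_n]$ is a surjective graded ring homomorphism.

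The substantive part is injectivity, i.e. that the $y_i$ are algebraically independent, and here both the inverse-limit structure and the freedom in choosing the lifts are essential. The governing principle is: finitely many homogeneous $h_1,\dots,h_k\in\mathfrak m$ are algebraically independent in $A$ as soon as, for a \emph{single} $n$, their truncations $\rho_n(h_1),\dots,\rho_n(h_k)$ are algebraically independent in $\Bbbk[x_1,\dots,x_n]$ — for then any homogeneous $P$ with $P(h_1,\dots,h_k)=0$ satisfies $P(\rho_n(h_1),\dots,\rho_n(h_k))=0$ and so $P=0$. (In characteristic zero one certifies the independence of the truncations by the Jacobian criterion, using the fact from the previous paragraph that a nonzero element of $A$ stays nonzero in $\Bbbk[x_1,\dots,x_n]$ for $n\gg0$.) For the degree-$1$ generators no choice is needed at all: linearly independent elements of $A_1=\prod_i\Bbbk$ truncate to linearly independent linear forms for $n\gg0$, which extend to a coordinate system and are therefore algebraically independent — which is exactly why a would-be relation like $\sum_{i=1}^mx_i^2=0$ cannot hold in $A$, although it does hold in the graded, non-polynomial ring $\Bbbk[x_1,\dots,x_m]/(\sum x_i^2)$ (which is even a UFD for $m\ge5$ over $\CC$): in the limit there is always "one more variable" available. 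The real work is to force the same outcome for the higher-degree generators: one organizes $I$ as a nested union of finite blocks and chooses the lifts block by block, using that each $y_i$ may be altered within its coset modulo $\mathfrak m^2$ and that, for a fixed finite block, $n$ may be taken as large as one likes, so as to arrange each block of truncated generators to be algebraically independent — say, a subset of a generic homogeneous system of parameters — in some $\Bbbk[x_1,\dots,x_n]$. Granting such a construction, a homogeneous $0\ne P\in\ker\phi$, necessarily involving only finitely many $y_i$, would give, after applying the relevant $\rho_n$, a genuine polynomial relation among algebraically independent elements, a contradiction; hence $\ker\phi=0$ and $\phi$ is an isomorphism.

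I expect this last step to be the main obstacle: upgrading the transparent degree-$1$ argument to a choice of lifts that works in all degrees and simultaneously for every finite sub-collection of generators — since for a careless choice the truncations $\rho_n(y_i)$ can genuinely satisfy relations in $\Bbbk[x_1,\dots,x_n]$ for every single $n$. This inductive bookkeeping is the technical heart of the Erman–Sam–Snowden argument \cite{erman2020big}. A more homological route would be to prove directly that the homogeneous prime $\ker\phi$ vanishes by showing that $\mathrm{Tor}^A_2(\Bbbk,\Bbbk)$ carries no "relations" summand — equivalently that the first André–Quillen homology $H_1(A/\Bbbk,\Bbbk)$ vanishes — but computing such invariants for the limit ring $A$ appears no more elementary than the reduction to finite polynomial rings sketched above.
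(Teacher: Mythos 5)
This statement is quoted from Erman--Sam--Snowden \cite[Theorem 1.1]{erman2020big}; the paper under review gives no proof of it, so the only meaningful comparison is with the ESS argument itself. Your outline gets the framing right (graded Nakayama for surjectivity, $A$ is a domain, reduction of algebraic independence to a single finite truncation $\rho_n$), but those are the routine parts. The entire content of the theorem is the step you defer with ``Granting such a construction'': that one can choose homogeneous lifts of a basis of $\mathfrak m/\mathfrak m^2$ which are algebraically independent. As written, this is not a proof but a restatement of the problem.

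Moreover, the specific mechanism you propose for that step does not obviously work. You suggest organizing $I$ as a nested union of finite blocks and adjusting the lifts block by block so that each block has algebraically independent truncations in some $\Bbbk[x_1,\dots,x_n]$. But $\mathfrak m/\mathfrak m^2$ has uncountable dimension already in degree $1$ (as you note, $A_1\cong\prod_i\Bbbk$), so $I$ is not an increasing union of finite blocks: each lift $y_i$ must be chosen once and for all, and must then be algebraically independent from \emph{every} finite sub-collection of the other (uncountably many) lifts simultaneously. A transfinite induction would require, at each stage, a single choice of $y_i$ valid against uncountably many finite constraints at once, and you give no reason such a choice exists. The ESS proof sidesteps this entirely by showing that \emph{any} homogeneous lifts work: linear independence modulo $\mathfrak m^2$ forces every nontrivial linear combination of same-degree lifts to have infinite strength, and the key theorem --- resting on the Ananyan--Hochster machinery (a derivation/Jacobian argument in characteristic zero, and substantially more work over general fields, which is the point of \cite{erman2020big}) --- is that elements of infinite collective strength are algebraically independent and indeed generate a polynomial subring. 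That input, which is also exactly the mechanism the present paper exploits via ``part of a system of variables'' in Proposition \ref{prop:characterization_systemVars}, is absent from your sketch, so the proposal has a genuine gap at its central step.
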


\begin{remark}
The notion of strength naturally extends both to series in $S^d_{\infty}$ and to homogeneous polynomials in $\Bbbk[y_i\mid i\in I]$. In both cases, we define the strength of an element $f$ to be the infimal length of a strength decomposition of $f$. When $f$ has a strength decomposition, this definition coincides with Definition~\ref{def:strength}. When $f$ has no strength decomposition, we instead say that $f$ has infinite strength.
Let 
\[
\phi\colon\Bbbk[y_i\mid i\in I]\to\Bbbk\oplus \bigoplus_{e=1}^\infty S^e_{\infty}
\]
be any graded $\Bbbk$-algebra isomorphism and let $f\in \Bbbk[y_i\mid i\in I]$ be a homogeneous polynomial of degree $d$. Then the strengths of $f$ in $\Bbbk[y_i\mid i\in I]$ and $\phi(f)$ in $S^d_{\infty}$ coincide.  
\end{remark}

\begin{proposition}
Let $f\in \Bbbk[y_i\mid i\in I]$ be a homogeneous polynomial of degree $d$. Then $f$ has finite strength if and only if $f\in \Bbbk[y_i\mid i\in I, \deg(y_i)<d]$.
\end{proposition}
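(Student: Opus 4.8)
The plan is to prove both implications directly, with the only real input being an observation about the grading. Write $R:=\Bbbk[y_i\mid i\in I]$. Since $\phi$ is a graded isomorphism and the degree-$0$ part of $\Bbbk\oplus\bigoplus_{e=1}^\infty S^e_{\infty}$ equals $\Bbbk$, every variable has $\deg(y_i)=d(i)\geq1$. Consequently a homogeneous element of $R$ of degree $e$ involves only the variables $y_i$ with $\deg(y_i)\leq e$; and conversely, a homogeneous element of $R$ of degree $d$ that involves only variables of degree $<d$ is, in its monomial expansion, a $\Bbbk$-linear combination of monomials $y_{i_1}\cdots y_{i_k}$ with $k\geq2$.

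For the ``only if'' direction I would invoke the remark preceding the proposition: $f$ has finite strength precisely when it admits a genuine strength decomposition $f=g_1h_1+\cdots+g_rh_r$ with each $g_i,h_i$ homogeneous of degree in $\{1,\dots,d-1\}$. Since $\deg(g_i),\deg(h_i)<d$, the first half of the observation shows that each $g_i$ and each $h_i$ lies in $\Bbbk[y_j\mid\deg(y_j)<d]$; hence so does every product $g_ih_i$, and therefore so does $f$.

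For the ``if'' direction I would take $f\in\Bbbk[y_i\mid\deg(y_i)<d]$ and expand it into its finitely many monomials. By the second half of the observation, each monomial has the form $y_{i_1}\cdots y_{i_k}$ with $k\geq2$; regrouping it as $y_{i_1}\cdot(y_{i_2}\cdots y_{i_k})$ exhibits it as a product of two forms of degrees $\deg(y_{i_1})$ and $d-\deg(y_{i_1})$, both in $\{1,\dots,d-1\}$. So each monomial has strength $\leq1$, and adding up the finitely many monomials of $f$ gives a strength decomposition of $f$ of finite length. (The case $f=0$ is trivial on both sides.)

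I do not expect a genuine obstacle: the whole argument is bookkeeping, and its only substantive ingredient is the grading observation of the first paragraph together with the fact, recorded in the preceding remark, that ``finite strength'' is by definition the same as ``admits a strength decomposition''. The one point I would state carefully is the equivalence between ``homogeneous of degree $<d$'' and ``uses only variables of degree $<d$'', since this is exactly where the positivity of the degrees $\deg(y_i)$ enters.
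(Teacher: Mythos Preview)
Your proof is correct and follows essentially the same route as the paper's: the paper likewise argues that the factors $g_i,h_i$ in a strength decomposition lie in $\Bbbk[y_j\mid\deg(y_j)<d]$ because they have degree $<d$, and conversely that every monomial of $f$ must be reducible. The only difference is that you make the grading observation (positivity of $\deg(y_i)$ and its consequences) explicit, which the paper leaves implicit.
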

\begin{proof}
Suppose that $f$ has finite strength. Then all terms in a strength decomposition of $f$ have degree $<d$ and are therefore contained in $\Bbbk[y_i\mid i\in I, \deg(y_i)<d]$. Hence $f$ is as well. 

Suppose that $f\in \Bbbk[y_i\mid i\in I, \deg(y_i)<d]$. Then the polynomial $f$ is a sum of monomials in variables $y_i$ of degree $<d$. Since $f$ has degree $d$, each of these monomials must be reducible. This yields a strength decomposition of $f$. Hence $f$ has finite strength.
\end{proof}

\begin{remark}
The proposition shows that all variables $y_i$ have infinite strength. In particular, the variables with degree $\geq2$. So in this setting, not all polynomials of degree $\geq 2$ have finite strength.
\end{remark}

\begin{definition}
A tuple of series
\[
(f_{e,j})_{e,j}\in \bigoplus_{e=1}^d  (S^e_{\infty})^{\oplus k_e}
\]
is \textbf{part of a system of variables (over $\Bbbk$)} when $(\phi^{-1}(f_{e,j}))_{e,j}$ is a tuple of distinct variables for some graded $\Bbbk$-algebra isomorphism $\phi\colon\Bbbk[y_i\mid i\in I]\to\Bbbk\oplus \bigoplus_{e=1}^\infty S^e_{\infty}$.
\end{definition}

\begin{proposition}\label{prop:characterization_systemVars}
A tuple of series
\[
(f_{e,j})_{e,j}\in \bigoplus_{e=1}^d  (S^e_{\infty})^{\oplus k_e}
\]
is part of a system of variables if and only if every element of
\[
\{\lambda_1f_{e,1}+\ldots+\lambda_{k_e}f_{e,k_e}\mid (\lambda_1:\cdots:\lambda_{k_e})\in\PP^{k_e-1}\}
\]
has infinite strength for all $e\in\{1,\ldots,d\}$. 
\end{proposition}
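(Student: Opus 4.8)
The plan is to prove the two implications separately; the forward one is immediate. Suppose $(f_{e,j})_{e,j}$ is part of a system of variables, witnessed by a graded $\Bbbk$-algebra isomorphism $\phi\colon\Bbbk[y_i\mid i\in I]\to\Bbbk\oplus\bigoplus_{e\ge1}S^e_\infty$ with $\phi^{-1}(f_{e,j})=y_{i(e,j)}$ distinct variables; as $\phi$ is graded, $\deg y_{i(e,j)}=e$. Fixing $e\in\{1,\dots,d\}$ and $(\lambda_1:\cdots:\lambda_{k_e})\in\PP^{k_e-1}$, the element $\phi^{-1}(\lambda_1f_{e,1}+\cdots+\lambda_{k_e}f_{e,k_e})=\lambda_1y_{i(e,1)}+\cdots+\lambda_{k_e}y_{i(e,k_e)}$ is a nonzero linear combination of distinct variables of degree $e$, hence lies outside $\Bbbk[y_i\mid\deg y_i<e]$; by the Proposition above characterizing finite strength it has infinite strength, and therefore so does $\lambda_1f_{e,1}+\cdots+\lambda_{k_e}f_{e,k_e}$, since strength is invariant under such isomorphisms by the Remark.

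For the converse I would induct on $N:=k_1+\cdots+k_d$. The case $N=0$ asserts the existence of a graded isomorphism $\Bbbk[y_i\mid i\in I]\cong\Bbbk\oplus\bigoplus_{e\ge1}S^e_\infty$, which is the quoted theorem of Erman, Sam and Snowden. For $N\ge1$, choose any index $(e^*,j^*)$; deleting $f_{e^*,j^*}$ yields a tuple whose degree-$e$ linear combinations form a subset of those of the original tuple, so the hypothesis persists, and by induction there is a graded isomorphism $\phi$ with $\phi^{-1}(f_{e,j})=y_{i(e,j)}$ distinct variables for all $(e,j)\ne(e^*,j^*)$. Put $g:=\phi^{-1}(f_{e^*,j^*})$, homogeneous of degree $e^*$, and let $\mathfrak m$ be the irrelevant maximal ideal of $\Bbbk[y_i\mid i\in I]$, so that $(\mathfrak m^2)_{e^*}$ coincides with the degree-$e^*$ part of $\Bbbk[y_i\mid\deg y_i<e^*]$. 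The crucial claim is that the class $\bar g\in\mathfrak m/\mathfrak m^2$ is not in the span of the classes $\bar y_{i(e^*,j)}$, $j\ne j^*$. Otherwise $g-h\in(\mathfrak m^2)_{e^*}$ for some $h=\sum_{j\ne j^*}c_jy_{i(e^*,j)}$, so by the Proposition characterizing finite strength $g-h$ has finite strength; but $g-h=\phi^{-1}\bigl(f_{e^*,j^*}-\sum_{j\ne j^*}c_jf_{e^*,j}\bigr)$ is $\phi^{-1}$ of a nonzero linear combination of $f_{e^*,1},\dots,f_{e^*,k_{e^*}}$, so it has infinite strength by hypothesis together with the strength-invariance of $\phi$ --- a contradiction.

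Granting the claim, $\bar g$ is a degree-$e^*$ class, so in its expansion in the basis $\{\bar y_i\mid\deg y_i=e^*\}$ of $(\mathfrak m/\mathfrak m^2)_{e^*}$ there is an index $i_0$ with $\deg y_{i_0}=e^*$, with $i_0\notin\{i(e,j)\mid(e,j)\ne(e^*,j^*)\}$, and with $\bar y_{i_0}$ occurring with a nonzero coefficient $c$; such $i_0$ exists because the only already-used indices of degree $e^*$ are the $i(e^*,j)$, $j\ne j^*$, which the claim excludes. Since $g$ and $y_{i_0}$ have the same degree, $g=c\,y_{i_0}+g'$ with $g'\in\Bbbk[y_i\mid i\ne i_0]$, and the graded $\Bbbk$-algebra endomorphism $\theta$ of $\Bbbk[y_i\mid i\in I]$ with $\theta(y_{i_0})=g$ and $\theta(y_i)=y_i$ for $i\ne i_0$ is an automorphism, with inverse determined by $y_{i_0}\mapsto c^{-1}(y_{i_0}-g')$. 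Replacing $\phi$ by $\phi\circ\theta$ and putting $i(e^*,j^*):=i_0$, one checks $(\phi\circ\theta)^{-1}(f_{e,j})=y_{i(e,j)}$ for every $(e,j)$, with the $i(e,j)$ pairwise distinct; hence $(f_{e,j})_{e,j}$ is part of a system of variables.

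The essential input is the identification, for homogeneous elements, of finite strength with membership in $\mathfrak m^2$ (equivalently, vanishing in $\mathfrak m/\mathfrak m^2$), which is immediate from the Proposition above; the rest is the observation that a finite linearly independent family of homogeneous classes in $\mathfrak m/\mathfrak m^2$ extends to one coming from a system of variables, organized as the induction above. The only point demanding care is the bookkeeping that keeps the chosen variables distinct, which is precisely why one needs $\bar g$ to avoid the span of the classes already used. A couple of harmless degenerate cases should be noted --- the hypothesis forces each $f_{e,1},\dots,f_{e,k_e}$ to be linearly independent (else the zero series, of strength $0$, would appear), and for $e=1$ the ideal $\mathfrak m^2$ has trivial degree-$1$ part, so the argument there reduces to this linear independence --- but neither causes trouble.
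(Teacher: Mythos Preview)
Your proof is correct and rests on the same idea as the paper's: the infinite-strength hypothesis forces the classes of the $f_{e,j}$ in $\mathfrak m/\mathfrak m^2$ to be linearly independent (equivalently, the ``linear parts'' $\ell_j$ in the paper's notation are independent), after which a graded automorphism turns them into genuine variables. The only difference is organizational: the paper inducts on the top degree $d$, handling all $k_d$ new elements at once by showing $\ell_1,\dots,\ell_{k_d}$ are linearly independent and then applying two automorphisms (a linear change in degree $d$ followed by a shear $\ell_j\mapsto\ell_j+g_j$), whereas you induct on the total count $N=k_1+\cdots+k_d$ and absorb one new element at a time via the automorphism $y_{i_0}\mapsto g$; both achieve the same end and neither is materially simpler than the other.
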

\begin{proof}
Let $\phi\colon\Bbbk[y_i\mid i\in I]\to\Bbbk\oplus \bigoplus_{e=1}^\infty S^e_{\infty}$ be a graded $\Bbbk$-algebra isomorphism. Suppose that $(\phi^{-1}(f_{e,j}))_{e,j}$ is a tuple of distinct variables. Then
\[
\lambda_1\phi^{-1}(f_{e,1})+\ldots+\lambda_{k_e}\phi^{-1}(f_{e,k_e})
\]
is not a polynomial in variables of degree $<e$ for every $(\lambda_1:\cdots:\lambda_{k_e})\in\PP^{k_e-1}$. Hence 
\[
\lambda_1f_{e,1}+\ldots+\lambda_{k_e}f_{e,k_e}
\]
has infinite strength for every $(\lambda_1:\cdots:\lambda_{k_e})\in\PP^{k_e-1}$. 

We prove the inverse statement using induction on $d$. So we may assume that
\[
(f_{e,j})_{e<d,j}\in \bigoplus_{e=1}^{d-1}  (S^e_{\infty})^{\oplus k_e}
\]
is part of a system of variables and $(\phi^{-1}(f_{e,j}))_j$ is a tuple of distinct variables of degree $e$ for all $e<d$. Hence, it is enough to construct a change of variables in $\Bbbk[y_i \mid i \in I]$ which is the identity on the $y_i$'s with degree $<d$ and turns $(\phi^{-1}(f_{d,j}))_j$ into a tuple of variables. Write
\[
\phi^{-1}(f_{d,j})=\ell_j+g_j
\]
where $\ell_j$ is a finite linear combination of the variables $y_i$ of degree $d$ and $g_j\in\Bbbk[y_i\mid i\in I,\deg(y_i)<d]$. Suppose that $\lambda_1\ell_1+\ldots+\lambda_{k_d}\ell_{k_d}=0$ for some $(\lambda_1:\cdots:\lambda_{k_d})\in\PP^{k_d-1}$. Then
\[
\lambda_1f_{d,1}+\ldots+\lambda_{k_d}f_{d,k_d}=\varphi(\lambda_1g_1+\ldots+\lambda_{k_d}g_{k_d})
\]
has finite strength, because $\lambda_1g_1+\ldots+\lambda_{k_d}g_{k_d}$ lies in $\Bbbk[y_i\mid i\in I,\deg(y_i)<d]$ and hence has finite strength. Since this is not the case, we see that $\ell_1,\ldots,\ell_{k_d}$ must be linearly independent. So there exists a graded $\Bbbk$-algebra automorphism $\psi$ of $\Bbbk[y_i\mid i\in I]$ sending $y_i\mapsto y_i$ when $\deg(y_i)\neq d$ such that $\psi^{-1}(\ell_j)$ is a variable for $j=1,\ldots,k_d$. Replacing $\phi$ by $\phi\circ\psi$, we may assume that the $\ell_j$ are already variables. Now, let $\omega$ be the automorphism $\Bbbk[y_i\mid i\in I]$ sending $\ell_j\mapsto\ell_j+g_j$ and sending all other variables to themselves. Then the isomorphism $\phi\circ\omega$ shows that $(f_{e,j})_{e,j}$ is part of a system of variables.
\end{proof}

The next lemma shows that there exist tuples defined over $\Bbbk$ that are part of a system of variables over any field extension of $\Bbbk$.

\begin{lemma}\label{lm:part_exists}
For $e=1,\ldots,d$, let $\pi_e\colon \{1,\ldots,k_e\}\times\{1,\ldots,e\}\times\NN\to\NN$ be an injection and take
\[
f_{e,j}=\sum_{i=1}^\infty x_{\pi_e(j,1,i)}\cdots x_{\pi_e(j,e,i)}\in S^e_{\infty}.
\]
Then $(f_{e,j})_{e,j}$ is part of a system of variables.
\end{lemma}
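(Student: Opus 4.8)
By Proposition~\ref{prop:characterization_systemVars}, it suffices to prove that for every $e\in\{1,\ldots,d\}$ and every $(\lambda_1:\cdots:\lambda_{k_e})\in\PP^{k_e-1}$ the series $f:=\lambda_1f_{e,1}+\cdots+\lambda_{k_e}f_{e,k_e}\in S^e_\infty$ has infinite strength. For $e=1$ this is immediate: the injectivity of $\pi_1$ makes the variables $x_{\pi_1(j,1,i)}$ pairwise distinct, so the $f_{1,j}$ are linearly independent and $f\neq0$; a nonzero element of degree~$1$ admits no strength decomposition at all and hence has infinite strength. So assume from now on that $e\geq 2$, and set $J=\{j:\lambda_j\neq0\}\neq\emptyset$.

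The plan is to truncate. For each $N$ let $\rho_N\colon\Bbbk\oplus\bigoplus_{e'\geq1}S^{e'}_\infty\to\Bbbk[x_1,\ldots,x_N]$ be the graded $\Bbbk$-algebra homomorphism induced by the projections $\pi_n$ (the substitution $x_m\mapsto0$ for $m>N$). Since $\rho_N$ is a graded algebra homomorphism, it does not increase strength: if $f=\sum_{i=1}^r g_ih_i$ is a strength decomposition in $S^e_\infty$, then $\rho_N(f)=\sum_{i=1}^r\rho_N(g_i)\rho_N(h_i)$ and, discarding the terms in which a factor becomes $0$, we obtain a strength decomposition of $\rho_N(f)$ of length $\leq r$. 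Now $\rho_N(f)=\sum_{(j,i)\in T_N}\lambda_j\, x_{\pi_e(j,1,i)}\cdots x_{\pi_e(j,e,i)}$, where $T_N=\{(j,i): j\in J \text{ and } \pi_e(j,a,i)\leq N \text{ for } a=1,\ldots,e\}$. By injectivity of $\pi_e$ each monomial occurring here is squarefree and any two of them are supported on disjoint sets of variables, and $M_N:=\#T_N\to\infty$ as $N\to\infty$ (fixing a single $j_0\in J$, all sufficiently large $N$ capture arbitrarily many values of $i$). Hence it remains to prove the following: \emph{a sum $F$ of $M$ squarefree degree-$e$ monomials (with $e\geq2$) supported on pairwise disjoint sets of variables, each scaled by a nonzero constant, has strength $\geq M$.} Granting this and applying it with $M=M_N$, any finite strength decomposition of $f$ would force $M_N\leq\str(f)<\infty$ for all $N$, a contradiction; so $f$ has infinite strength.

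To prove the claim, write $F=\sum_{\ell=1}^{M}c_\ell m_\ell$ in $N=Me$ variables, where $m_\ell$ is the product of the $e$ variables in a set $B_\ell$, the $B_\ell$ are disjoint, and each $c_\ell\neq0$. Suppose $F=\sum_{i=1}^r g_ih_i$ with all $g_i,h_i$ homogeneous of positive degree, and let $Z\subseteq\PP^{N-1}$ be the common zero locus of $g_1,h_1,\ldots,g_r,h_r$. This is cut out by $2r$ equations, so by the projective Krull height theorem, if $N-1-2r\geq0$ then $Z\neq\emptyset$ and $\dim Z\geq N-1-2r$. On the other hand, at every point of $Z$ all partial derivatives $\partial F/\partial x=\sum_i\big((\partial g_i/\partial x)h_i+g_i(\partial h_i/\partial x)\big)$ vanish, so $Z\subseteq W:=V\big(\{\partial m_\ell/\partial x : 1\leq\ell\leq M,\ x\in B_\ell\}\big)$ (the nonzero constants $c_\ell$ do not affect this vanishing locus). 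For fixed $\ell$, the conditions $\partial m_\ell/\partial x=0$ for all $x\in B_\ell$ hold exactly when at least two variables of $B_\ell$ vanish; hence $W$ is a union of coordinate subspaces, each of dimension $M(e-2)-1$, so $\dim W=M(e-2)-1$ (interpreted as $W=\emptyset$ when $e=2$). Therefore, if $N-1-2r\geq0$ we get $N-1-2r\leq\dim Z\leq\dim W=M(e-2)-1$, which using $N=Me$ simplifies to $r\geq M$; and if $N-1-2r<0$ then $2r\geq N=Me\geq 2M$, so again $r\geq M$. This proves the claim.

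The argument is essentially mechanical once set up, and the two places that need care are exactly the two I expect to be the real content: first, that passing to the truncation $\rho_N$ does not increase strength (this is why one argues with the graded algebra homomorphism rather than attempting geometry directly in the infinite-dimensional $S^e_\infty$); and second, the dimension estimate $\dim W=M(e-2)-1$, where the injectivity hypothesis on $\pi_e$—which guarantees that the monomials of $\rho_N(f)$ are squarefree and supported on disjoint variables—is precisely what keeps this singular locus small enough to beat the codimension bound $2r$ coming from a length-$r$ strength decomposition.
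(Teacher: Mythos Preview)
Your proof is correct and takes a genuinely different route from the paper's. Both arguments begin by invoking Proposition~\ref{prop:characterization_systemVars} to reduce to showing that each nonzero $\Bbbk$-combination $f=\sum_j\lambda_jf_{e,j}$ has infinite strength; from there they diverge. The paper stays in the inverse limit and uses the Erman--Sam--Snowden isomorphism $\phi$: if $f$ had finite strength then $\phi^{-1}(f)$ would lie in $\Bbbk[y_i\mid\deg(y_i)<e]$, whence all partials $\phi^{-1}(\partial f/\partial x_j)$ would lie in one finitely generated ideal $(y_1,\dots,y_k)$; but by the injectivity of $\pi_e$ these partials are (up to scalar) infinitely many monomials supported on pairwise disjoint sets of $x$-variables, which cannot all lie in such an ideal. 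You instead truncate to $\Bbbk[x_1,\dots,x_N]$ via the graded homomorphism $\rho_N$ (which cannot increase strength), obtain a sum of $M_N\to\infty$ squarefree degree-$e$ monomials on disjoint variable blocks, and bound its strength below by $M_N$ via the Ananyan--Hochster singular-locus bound: a length-$r$ decomposition forces $\codim\Sing(F)\leq 2r$, while direct inspection gives $\codim\Sing(F)=2M_N$. The paper's argument is shorter and entirely algebraic; yours is more self-contained and geometric, and makes explicit the link to the $\codim\Sing$ bound the paper only mentions in its closing remark. One small point: the projective intersection step (``$Z\neq\emptyset$ when $N-1\geq 2r$'') tacitly uses an algebraically closed field; since a strength decomposition over $\Bbbk$ is also one over $\bar\Bbbk$, this is harmless, but you should say so.
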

\begin{proof}
Let $\phi\colon\Bbbk[y_i\mid i\in I]\to\Bbbk\oplus \bigoplus_{e=1}^\infty S^e_{\infty}$ be a graded $\Bbbk$-algebra isomorphism and suppose that 
\[
\lambda_1f_{e,1}+\ldots+\lambda_{k_e}f_{e,k_e}
\]
has finite strength for some $(\lambda_1:\cdots:\lambda_{k_e})\in\PP^{k_e-1}$. Then 
\[
\phi^{-1}(\lambda_1f_{e,1}+\ldots+\lambda_{k_e}f_{e,k_e})\in \Bbbk[y_i\mid i\in I,\deg(y_i)<e]
\]
Let $f\in \Bbbk[y_i\mid i\in I,\deg(y_i)<e]$ be a homogeneous polynomial of degree $e$. By relabelling the variables, we may assume that $f=f(y_1,\ldots,y_k)$ for some variables $y_1,\ldots,y_k$ of degree $<e$. The product rule shows that
\[
\phi^{-1}\left(\frac{\partial}{\partial x_j}\phi(f)\right)\in (y_1,\ldots,y_k)
\]
for each $j\in\NN$. Hence, in order to get a contradiction, it suffices to prove for all $(\lambda_1:\cdots:\lambda_{k_e})\in\PP^{k_e-1}$ that
\[
\left\{\phi^{-1}\left(\frac{\partial}{\partial x_j}(\lambda_1f_{e,1}+\ldots+\lambda_{k_e}f_{e,k_e})\right)\,\middle|\, j\in\NN\right\}
\]
is not contained in an ideal of $\Bbbk[y_i\mid i\in I]$ generated by finitely many variables. This is indeed the case since, by construction, the above set consists up to scaling of infinitely many monomials in pairwise distinct variables. So, by Proposition \ref{prop:characterization_systemVars}, we see that $(f_{e,j})_{e,j}$ is part of a system of variables.
\end{proof}

Before we explain our proof strategy, we first give the intuition behind it. Let $h\in\Bbbk[y_1,\ldots,y_n]$ be a homogeneous polynomial of degree $d\geq2$ where we have $\deg(y_i)=d_i>0$. Let $f_1,\dots,f_n$ be forms of degrees $d_1,\ldots,d_n$ in variables $x_1,\ldots,x_m$ of degree $1$. Then we can consider the form $h(f_1,\ldots,f_n)$.  Suppose that $h$ has a strength decomposition, say of length $r$. Then, by evaluating this strength decomposition in $f_1,\ldots,f_n$, we get a strength decomposition of $h(f_1,\ldots,f_n)$. We see that $\str(h(f_1,\ldots,f_n))\leq\str(h)$ for all $f_1,\ldots,f_n$ whenever the latter is finite. In general, we have no reason to expect equality to hold; indeed, we also have $\str(h(f_1,\ldots,f_n))\leq\str(f_1)+\ldots+\str(f_n)$ and $\str(h(f_1,\ldots,f_n))\leq m$ which may yield far stronger bounds in certain cases. 

Now, the idea behind our proof is that the strength of $h$ is much easier to compute than the strength of $h(f_1,\ldots,f_n)$. So we first go to a setting where $f_1,\ldots,f_n$ can be treated as if they are variables and so the strength of these two forms is in fact the same. This setting is precisely the case where $(f_1,\ldots,f_n)$ is part of a system of variables. We then translate this back to statements about polynomials.

Next, we explain our proof strategy in detail. It is inspired by the theory of polynomial functors from \cite{bikgeometry} established by Bik, Draisma, Eggermont, and Snowden. A polynomial transformation $\alpha\colon Q\to P$ naturally induces a map $\alpha_{\infty}\colon Q_{\infty}\to P_{\infty}$. \vspace{-2pt}

\begin{proposition}\label{mainpolyfunctor}
Take $P=S^{d_1}\oplus\cdots\oplus S^{d_k}$ and $Q=S^{e_1}\oplus\cdots\oplus S^{e_\ell}$. Let $\alpha\colon P\to S^d$ and $\beta\colon Q\to S^d$ be polynomial transformations and let $p=(p_1,\ldots,p_k)\in P_{\infty}, q=(q_1,\ldots,q_\ell)\in Q_{\infty}$. Suppose that $q$ is part of a system of variables and $\alpha_{\infty}(p)=\beta_{\infty}(q)$. Then $\beta=\alpha\circ\gamma$ for some polynomial transformation $\gamma\colon Q\to P$.
\end{proposition}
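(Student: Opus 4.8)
The plan is to translate the functional-equation hypothesis $\alpha_\infty(p)=\beta_\infty(q)$ into an honest polynomial identity in the abstract polynomial ring $\Bbbk[y_i\mid i\in I]$ of the Erman--Sam--Snowden theorem, and then read off $\gamma$ from it. First, because the common target $S^d$ has a single factor, Remark~\ref{remark:polytrans_onetoone} gives unique forms $A\in\Bbbk[X_1,\ldots,X_k]$ and $B\in\Bbbk[Y_1,\ldots,Y_\ell]$ of degree $d$ (for the weightings $\deg X_i=d_i$ and $\deg Y_j=e_j$) with $\alpha_V(f_1,\ldots,f_k)=A(f_1,\ldots,f_k)$ and $\beta_V(g_1,\ldots,g_\ell)=B(g_1,\ldots,g_\ell)$ for all $V\in\Vec$. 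Unwinding the definition of the induced maps on inverse limits, and using that the multiplication in the graded $\Bbbk$-algebra $\Bbbk\oplus\bigoplus_{e\ge1}S^e_\infty$ is computed coordinatewise, the hypothesis becomes the identity $A(p_1,\ldots,p_k)=B(q_1,\ldots,q_\ell)$ in $S^d_\infty$.

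Next I would exploit that $q$ is part of a system of variables: choose a graded $\Bbbk$-algebra isomorphism $\phi\colon\Bbbk[y_i\mid i\in I]\to\Bbbk\oplus\bigoplus_{e\ge1}S^e_\infty$ with $\phi^{-1}(q_j)=y_{i_j}$ for pairwise distinct $i_1,\ldots,i_\ell\in I$ (necessarily $\deg y_{i_j}=e_j$). Applying the $\Bbbk$-algebra map $\phi^{-1}$ to the identity above and setting $\widetilde p_j:=\phi^{-1}(p_j)$, a homogeneous element of degree $d_j$, we get
\[
A(\widetilde p_1,\ldots,\widetilde p_k)=B(y_{i_1},\ldots,y_{i_\ell})\qquad\text{in }\Bbbk[y_i\mid i\in I].
\]
Now apply the graded $\Bbbk$-algebra retraction $\rho\colon\Bbbk[y_i\mid i\in I]\to\Bbbk[y_{i_1},\ldots,y_{i_\ell}]$ that fixes $y_{i_1},\ldots,y_{i_\ell}$ and sends every other variable to $0$: the right-hand side is unchanged, and the left becomes $A(\rho(\widetilde p_1),\ldots,\rho(\widetilde p_k))$. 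Since $y_{i_1},\ldots,y_{i_\ell}$ are algebraically independent over $\Bbbk$, each $\rho(\widetilde p_j)$ equals $C_j(y_{i_1},\ldots,y_{i_\ell})$ for a unique form $C_j\in\Bbbk[Y_1,\ldots,Y_\ell]$ of degree $d_j$, and comparing the two sides of the resulting identity — again by algebraic independence — yields $A(C_1,\ldots,C_k)=B$ in $\Bbbk[Y_1,\ldots,Y_\ell]$.

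To conclude, let $\gamma\colon Q\to P$ be the polynomial transformation defined by $(C_1,\ldots,C_k)$, which makes sense since $\deg C_i=d_i$; by Remark~\ref{remark:polytrans_composition}, $\alpha\circ\gamma$ is defined by $A(C_1,\ldots,C_k)=B$, so $\alpha\circ\gamma=\beta$ by Remark~\ref{remark:polytrans_onetoone}. The one genuinely substantive move is the middle one — invoking the system-of-variables hypothesis to promote the series $q_j$ to literal variables, which is exactly what converts the functional equation into a polynomial identity and lets the retraction $\rho$ erase every irrelevant variable. The surrounding points are routine verifications: that $\phi^{-1}$ commutes with evaluating $A$ (clear, $\phi^{-1}$ being a $\Bbbk$-algebra homomorphism), and that $\rho$ is a well-defined graded $\Bbbk$-algebra map compatible with this evaluation. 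I do not expect any obstacle beyond this bookkeeping.
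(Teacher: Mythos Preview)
Your proof is correct and essentially identical to the paper's: both transport the equation $\alpha_\infty(p)=\beta_\infty(q)$ through a graded isomorphism $\phi^{-1}$ so that the $q_j$ become literal variables, then kill the remaining variables (you via a retraction $\rho$, the paper by relabelling so the $\widetilde p_j$ lie in $\Bbbk[y_1,\ldots,y_{\ell'}]$ and setting $y_i=0$ for $i>\ell$) and read off the polynomials $C_j$ defining $\gamma$. The only cosmetic difference is that the paper explicitly invokes finiteness of the variable set appearing in each $\widetilde p_j$ before specialising, whereas you phrase the specialisation as a single algebra map $\rho$ on the whole ring.
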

\begin{proof}
When $\mathrm{char}(\Bbbk)=0$, this is a special case of \cite[Proposition 4.5.17]{bik2020phd}. Let $\phi\colon\Bbbk[y_i\mid i\in I]\to\Bbbk\oplus \bigoplus_{e=1}^\infty S^e_{\infty}$ be a graded $\Bbbk$-algebra isomorphism such that $(\phi^{-1}(q_1),\ldots,\phi^{-1}(q_\ell))$ is a tuple of distinct variables. By relabelling the variables, we may assume that $\phi^{-1}(q_i)=y_i$. We have $\beta_{\infty}(q)=\phi(B(y_1,\ldots,y_\ell))$ and $\alpha_{\infty}(p)=\phi(A(z_1,\ldots,z_k))$ for some polynomials $A,B$ where $z_j=\phi^{-1}(p_j)\in\Bbbk[y_i\mid i\in I]$. As the $z_j$ are elements of $\Bbbk[y_i\mid i\in I]$, by relabelling more variables, we may assume that they are polynomials in $y_1,\ldots,y_{\ell'}$ for some $\ell'\geq \ell$. y the assumption $\alpha_\infty(p) = \beta_\infty(q)$, we have
\[
A(z_1(y_1,\ldots,y_{\ell'}),\ldots,z_k(y_1,\ldots,y_{\ell'}))=B(y_1,\ldots,y_\ell).
\]
Now, we set $y_i=0$ for $i>\ell$ on both sides and find that
\[
A(z_1(y_1,\ldots,y_{\ell},0,\ldots,0),\ldots,z_k(y_1,\ldots,y_{\ell},0,\ldots,0))=B(y_1,\ldots,y_\ell).
\]
Write $F_j=z_j(y_1,\ldots,y_{\ell},0,\ldots,0)\in\Bbbk[y_1,\ldots,y_\ell]$. Then we see that $\beta=\alpha\circ\gamma$ for the polynomial transformation $\gamma\colon Q\to P$ defined by $(F_1,\ldots,F_k)$.
\end{proof}\vspace{-2pt}

In other words, if $\beta_{\infty}(q)\in\im(\alpha_\infty)$, then $\beta$ factors through $\alpha$. To relate this condition back to polynomials, we have the following lemma.\vspace{-2pt}

\begin{lemma}\label{image}
Suppose that $\Bbbk$ is algebraically closed and let $\mathbb{L}$ be an uncountable algebraically closed extension of $\Bbbk$. Take $P=S^{d_1}\oplus\cdots\oplus S^{d_k}$, let $\alpha\colon P\to S^d$ be a polynomial transformation and let $f\in S^d_{\infty}$ be the inverse limit of a sequence $(f_n)_n\in\prod_{n=1}^{\infty}S^d(\Bbbk^n)$. Assume that $f=\beta_{\infty}(q)$ for some polynomial transformation $\beta\colon Q\to S^d$ where $Q=S^{e_1}\oplus\cdots\oplus S^{e_\ell}$ and $q\in Q_{\infty}$ is part of system of variables over $\mathbb{L}$. Then $f\in\im(\alpha_\infty)$ if and only if $f_n\in\im(\alpha_{\Bbbk^n})$ for all~$n\in\NN$.
\end{lemma}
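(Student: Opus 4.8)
The plan is to prove the two implications separately; the forward one is a formality and the reverse one is the crux.

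\emph{The ``only if'' direction.} Suppose $f\in\im(\alpha_\infty)$, say $f=\alpha_\infty(p)$ for $p=(p_n)_n\in P_\infty$. Since $\alpha$ is given by a fixed polynomial, it is a natural transformation of functors to $\Set$, so $\alpha_\infty$ acts componentwise, i.e.\ $\alpha_\infty(p)=(\alpha_{\Bbbk^n}(p_n))_n$ (the naturality identity $S^d(\pi_n)\circ\alpha_{\Bbbk^{n+1}}=\alpha_{\Bbbk^n}\circ P(\pi_n)$ guarantees this sequence is compatible with the projections). Hence $f_n=\alpha_{\Bbbk^n}(p_n)\in\im(\alpha_{\Bbbk^n})$ for every $n$, using nothing beyond the definitions.

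\emph{The ``if'' direction.} Assume $f_n\in\im(\alpha_{\Bbbk^n})$ for all $n$. The obstruction is that preimages $p_n\in P(\Bbbk^n)$ of the $f_n$ need not be compatible under $P(\pi_n)$, so we cannot directly assemble an element of $P_\infty$; over $\Bbbk$ this can genuinely fail. To get around this I would first pass to $\mathbb{L}$. For each $n$, let $Z_n\subseteq P(\Bbbk^n)$ be the fibre $\alpha_{\Bbbk^n}^{-1}(f_n)$, viewed as an affine $\Bbbk$-scheme (it is cut out by polynomial equations with $\Bbbk$-coefficients, since $\alpha$ is defined over $\Bbbk$ and $f_n\in S^d(\Bbbk^n)$), and nonempty by hypothesis. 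Naturality of $\alpha$ together with $S^d(\pi_n)(f_{n+1})=f_n$ shows that $P(\pi_n)$ restricts to an affine morphism $Z_{n+1}\to Z_n$. Therefore the scheme-theoretic inverse limit $Z_\infty=\varprojlim_n Z_n=\mathrm{Spec}\big(\varinjlim_n\Gamma(Z_n)\big)$ is nonempty: the transition maps on coordinate rings are unital $\Bbbk$-algebra homomorphisms, so $1\neq 0$ in the filtered colimit since no $\Gamma(Z_n)$ vanishes. Pick a point $z\in Z_\infty$; its residue field $\kappa(z)$ is, by the same colimit description, the fraction field of a countable directed union of finitely generated $\Bbbk$-domains, hence $\mathrm{trdeg}_\Bbbk\kappa(z)\leq\aleph_0$. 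Since $\mathbb{L}$ is algebraically closed of transcendence degree over $\Bbbk$ larger than $\aleph_0$ (this is the property of the uncountable field $\mathbb{L}$ that is used, and we may always arrange it), $\kappa(z)$ embeds into $\mathbb{L}$ over $\Bbbk$. Composing $\varinjlim_n\Gamma(Z_n)\to\kappa(z)\hookrightarrow\mathbb{L}$ gives a compatible family $p_n\in Z_n(\mathbb{L})\subseteq P(\mathbb{L}^n)$, that is, an element $p\in P_{\mathbb{L},\infty}$ with $\alpha_{\mathbb{L},\infty}(p)=(f_n)_n=f_{\mathbb{L}}$; so $f_{\mathbb{L}}\in\im(\alpha_{\mathbb{L},\infty})$.

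Now I would bring in the structural input. Base change along $\Bbbk\hookrightarrow\mathbb{L}$ is harmless, so $f_{\mathbb{L}}=\beta_{\mathbb{L},\infty}(q)$ with $q$ part of a system of variables over $\mathbb{L}$, and we have just shown $\beta_{\mathbb{L},\infty}(q)=\alpha_{\mathbb{L},\infty}(p)$. Proposition~\ref{mainpolyfunctor}, applied over $\mathbb{L}$, yields a polynomial transformation $\gamma\colon Q\to P$ over $\mathbb{L}$ with $\beta_{\mathbb{L}}=\alpha_{\mathbb{L}}\circ\gamma$. Finally I would descend $\gamma$ to $\Bbbk$: by Remarks~\ref{remark:polytrans_onetoone} and \ref{remark:polytrans_composition}, polynomial transformations $Q\to P$ form a fixed finite-dimensional affine space $\mathcal H$ defined over $\Bbbk$ (tuples of forms of prescribed degrees), and the equation $\alpha\circ\gamma=\beta$ amounts to equating coefficients in a finite-dimensional space of forms and hence cuts out a Zariski-closed subscheme $Z\subseteq\mathcal H$ defined over $\Bbbk$. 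We have shown $Z(\mathbb{L})\neq\emptyset$, so $Z$ is nonempty as a scheme, and since $\Bbbk$ is algebraically closed the Nullstellensatz provides a $\Bbbk$-point $\gamma_0\in Z(\Bbbk)$, i.e.\ a polynomial transformation $\gamma_0\colon Q\to P$ over $\Bbbk$ with $\alpha\circ\gamma_0=\beta$. Then $\alpha_\infty(\gamma_{0,\infty}(q))=\beta_\infty(q)=f$ with $\gamma_{0,\infty}(q)\in P_\infty$, so $f\in\im(\alpha_\infty)$.

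The main obstacle is precisely the step in the reverse direction that upgrades ``$f_n\in\im(\alpha_{\Bbbk^n})$ for every $n$'' to a single coherent element of $P_\infty$: this is false over $\Bbbk$ in general, and the fix is to pass to the large field $\mathbb{L}$, combine the (standard) nonemptiness of inverse limits of nonempty affine schemes with the factorization Proposition~\ref{mainpolyfunctor}, and then descend the resulting $\gamma$ back to $\Bbbk$, which is routine. Everything else — the forward implication, the base-change bookkeeping, and the Nullstellensatz descent — is bookkeeping.
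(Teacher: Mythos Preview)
Your argument follows exactly the paper's three-step architecture for the reverse direction: (1) produce a preimage of $f$ under $\alpha_\infty$ over the large field $\mathbb{L}$; (2) invoke Proposition~\ref{mainpolyfunctor} over $\mathbb{L}$ to factor $\beta_{\mathbb{L}}=\alpha_{\mathbb{L}}\circ\gamma$; (3) descend $\gamma$ to $\Bbbk$ via the Nullstellensatz on the finite-dimensional parameter space of polynomial transformations. Steps (2) and (3) match the paper verbatim.

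The one genuine difference is in step (1). The paper obtains the $\mathbb{L}$-preimage by appealing to Lang's infinite-dimensional Nullstellensatz \cite{lang}: the condition $\alpha_\infty(p)=f$ is a countable system of polynomial equations in countably many unknowns, every finite subsystem is solvable by hypothesis, and Lang's theorem gives a global solution over any uncountable algebraically closed field. Your route instead takes the scheme-theoretic inverse limit $Z_\infty=\varprojlim Z_n$, picks a point, and embeds its residue field into $\mathbb{L}$. This is clean, but it needs $\mathrm{trdeg}_\Bbbk\mathbb{L}>\aleph_0$, which is \emph{not} implied by ``$\mathbb{L}$ uncountable'' (take $\mathbb{L}=\Bbbk$ with $\Bbbk$ itself uncountable, or $\mathbb{L}=\overline{\Bbbk(t)}$). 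Your parenthetical ``we may always arrange it'' is correct for the downstream application, where $\mathbb{L}$ is chosen freely, but it does not establish the lemma as stated. Swapping your inverse-limit step for a direct invocation of Lang's theorem removes this discrepancy with no change to the rest of your proof.
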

\begin{proof}
When $\mathrm{char}(\Bbbk)=0$, this is a special case of \cite[Lemma 4.5.24]{bik2020phd}. We follow its proof. Clearly, if $f\in\im(\alpha_\infty)$, then $f_n\in\im(\alpha_{\Bbbk^n})$ for all~$n\in\NN$. So assume that $f_n\in\im(\alpha_{\Bbbk^n})$ for all~$n\in\NN$.

Before we prove the general case, we first consider the case where $\mathbb{L} = \Bbbk$. Let $p=(p_n)_n\in P_{\infty}$. Then the equality $\alpha_\infty(p)=f$ holds if and only if $\alpha_{\Bbbk^n}(p_n)=f_n$ holds for all $n\in\NN$. This translates the condition $\alpha_\infty(p)=f$ into polynomial equations in countably many variables and the condition that $f_n\in\im(\alpha_{\Bbbk^n})$ for all~$n\in\NN$ shows that any finite number of these equations has a solution. Hence, by Lang's theorem from \cite{lang} the entire system has a solution since $\Bbbk$ is algebraically closed and uncountable.

Now for the general case, note that by the first part of the proof, there exists a $p\in P_{\infty}^{\mathbb{L}}$ defined over $\mathbb{L}$ such that $\alpha_{\infty}^{\mathbb{L}}(p)=f$. We now have $\alpha_{\infty}^{\mathbb{L}}(p)=\beta_{\infty}^{\mathbb{L}}(q)$. By Proposition \ref{mainpolyfunctor}, it follows that $\beta^{\mathbb{L}}=\alpha^{\mathbb{L}}\circ\gamma^{\mathbb{L}}$ for some polynomial transformation $\gamma^{\mathbb{L}}\colon Q^{\mathbb{L}}\to P^{\mathbb{L}}$ defined over $\mathbb{L}$. The condition $\beta=\alpha\circ\gamma$ defines a Zariski-closed subset in the finite-dimensional space of polynomial transformations $\gamma\colon Q\to P$: since we have just observed that this Zariski-closed set is non-empty over $\mathbb{L}$ and $\Bbbk$ is algebraically closed, then it must be non-empty also over $\Bbbk$. In particular, we get that $\beta=\alpha\circ\gamma$ for some $\gamma\colon Q\to P$. The point $p=\gamma_\infty(q)$ satisfies $\alpha_\infty(p)=f$.
\end{proof}

This gives us the following proof strategy: let 
\[
\alpha\colon S^{d_1}\oplus\cdots\oplus S^{d_k}\to S^{d}\mbox{ and }\beta\colon S^{e_1}\oplus\cdots\oplus S^{e_\ell}\to S^{d}
\]
be polynomial transformations defined by polynomials $F$ and $G$ respectively. If $\beta$ factors through $\alpha$, then $G=F(H_1,\ldots,H_k)$ for some polynomials $H_1,\ldots,H_k$. So if we can prove this not to be the case, then $f:=\beta_{\infty}(q)\not\in\im(\alpha_\infty)$ for any point $q$ that is part of a system of variables over all field extensions of $\Bbbk$ and hence we get $f_n\not\in\im(\alpha_{\Bbbk^n})$ for the projection $f_n$ of such an $f$ to $S^d(\Bbbk^n)$ for some integer $n\geq1$. Our goal now is to choose $\alpha,\beta$ such that this conclusion is exactly what we want.

\begin{remark}\label{increasingn}
Since the diagram
\[
\xymatrix{
S^{d_1}(V)\oplus\cdots\oplus S^{d_k}(V)\ar[rr]^{\alpha_V}\ar[d]^{S^{d_1}(L)\oplus\cdots\oplus S^{d_k}(L)}&&S^{d}(V)\ar[d]^{S^{d}(L)}&&S^{e_1}(V)\oplus\cdots\oplus S^{e_\ell}(V)\ar[ll]_{\beta_V}\ar[d]^{S^{e_1}(L)\oplus\cdots\oplus S^{e_\ell}(L)}\\
S^{d_1}(W)\oplus\cdots\oplus S^{d_k}(W)\ar[rr]_{\alpha_W}&&S^{d}(W)&&S^{e_1}(W)\oplus\cdots\oplus S^{e_\ell}(W)\ar[ll]^{\beta_W}
}
\]
commutes for all linear maps $L\colon V\to W$, it in this case follows that $f_n\not\in\im(\alpha_{\Bbbk^n})$ for all $n\gg1$ by choosing for $L$ the projection maps $\pi_n$. More precisely, for $L=\pi_n$, the diagram shows that if $f_{n+1}\in\im(\alpha_{\Bbbk^{n+1}})$, then also $f_n\in\im(\alpha_{\Bbbk^n})$.
\end{remark}

\section{The example in a finite setting}

Before explaining our example, we prove a theorem which will be the heart of the proof in the next section. Here, we consider polynomials in the polynomial ring $\Bbbk[x,y,u,v,f,g,p,q]$ where $x,y,u,v$ and $f,g,p,q$ are variables of degrees $1$ and $2$, respectively. We start by defining the strength of a homogeneous polynomial in this setting.

\begin{definition}
A \textbf{strength decomposition} of a homogeneous polynomial $h\in\Bbbk[x,y,u,v,f,g,p,q]$ is an expression of the form
\begin{equation}
h = g_1h_1 + \ldots + g_rh_r, \quad \text{ where $g_i,h_i$ are homogeneous with } 1\leq \deg(g_i),\deg(h_i) \leq \deg(h)-1.
\end{equation}
We define the \textbf{strength} of $h$ to be the smallest length of a strength decomposition of $h$.
\end{definition}

\begin{theorem}\label{thm:strength=4}
The polynomial $x^2f+y^2g+u^2p+v^2q\in \Bbbk[x,y,u,v,f,g,p,q]$ has strength $4$.
\end{theorem}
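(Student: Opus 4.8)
The first task is to show that the strength is at most $4$, which is immediate: the expression $x^2f + y^2g + u^2p + v^2q$ itself is a strength decomposition of length $4$, since each $x^2f$, $y^2g$, $u^2p$, $v^2q$ is a product of two homogeneous forms of positive degree $<4$. So the entire content is the lower bound: there is no strength decomposition of length $\leq 3$. Suppose for contradiction that
\[
x^2f + y^2g + u^2p + v^2q = a_1b_1 + a_2b_2 + a_3b_3
\]
with each $a_i, b_i$ homogeneous of degree $1, 2$, or $3$ (and $\deg a_i + \deg b_i = 4$). I would first reduce to the case where each $a_i$ has degree $1$ or $2$ (a degree $3 + 1$ factorization is the same as a $1 + 3$ one), and then organize the argument by how many of the three products are of ``type $(1,3)$'' versus ``type $(2,2)$''.

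The key algebraic tool is that $R = \Bbbk[x,y,u,v,f,g,p,q]$ is a polynomial ring, hence a UFD, and in fact I would work modulo the ideal generated by $f, g, p, q$ — or better, use the grading where $x,y,u,v$ have degree $1$ and $f,g,p,q$ have a separate degree — to control which monomials can appear. The crucial structural observation is that $x^2 f + y^2 g + u^2 p + v^2 q$, viewed as a polynomial, is ``generic enough'': it has no nontrivial linear factor, and the ideal it defines / the syzygies among $x^2, y^2, u^2, v^2$ are well understood since $x,y,u,v$ form a regular sequence. Concretely, I would exploit that $f, g, p, q$ appear \emph{linearly}, so that after specializing or comparing coefficients of $f, g, p, q$, any factorization $a_i b_i$ must distribute these four degree-$2$ variables among the factors in a very restricted way; in particular a $(2,2)$ product $a_i b_i$ contributing to the $f$-term forces $f \mid a_i$ or $f \mid b_i$ after reduction, and similarly for $g,p,q$. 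Counting: three products cannot simultaneously ``account for'' the four independent monomials $x^2 f, y^2 g, u^2 p, v^2 q$ together with the constraint that the $x^2, y^2, u^2, v^2$ parts must also be matched, because $x^2, y^2, u^2, v^2$ are pairwise non-associate irreducibles and a single product $a_i b_i$ can involve at most a bounded number of them.

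The step I expect to be the main obstacle is the bookkeeping in the mixed case — say two products of type $(2,2)$ and one of type $(1,3)$, or one $(2,2)$ and two $(1,3)$ — where the degree-$2$ factors $a_i$ can be arbitrary quadrics in $x,y,u,v$ (not just $x^2$ etc.), and the degree-$3$ factors can mix the linear and quadratic variables. Here I would set up coordinates on the finite-dimensional space of such quadrics and cubics and argue that the system of equations obtained by equating coefficients of $x^2 f, y^2 g, u^2 p, v^2 q$ and of the pure degree-$4$-in-$(x,y,u,v)$ part is inconsistent; the cleanest route is probably to specialize the degree-$2$ variables, e.g. set three of $f,g,p,q$ to $0$ and the fourth to a new variable, reducing to a statement about the strength of $x^2 f$ together with $y^2 g + u^2 p + v^2 q$-type pieces, or to pass to the quotient by $(f,g,p,q)^2$ and analyze the induced decomposition there. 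Once the three-term case is excluded, the theorem follows, and combined with the trivial upper bound the strength equals exactly $4$.
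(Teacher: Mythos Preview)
Your outline matches the paper's: trivial upper bound, then a four-way case split on how many of the three summands are of type $(1,3)$ versus $(2,2)$, with the main leverage coming from comparing coefficients of $f,g,p,q$. But the plan stops short of the actual arguments, and the tactics you sketch for the hard cases are not the ones that work. Your heuristic ``a $(2,2)$ product $a_ib_i$ contributing to the $f$-term forces $f\mid a_i$ or $f\mid b_i$'' is off: writing $a_i=G_i+\hat g_i$, $b_i=H_i+\hat h_i$ with $G_i,H_i$ linear in $f,g,p,q$ and $\hat g_i,\hat h_i\in\Bbbk[x,y,u,v]$, the $f$-coefficient of $a_ib_i$ involves both $\hat g_i$ and $\hat h_i$, and no divisibility is forced. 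What the paper actually uses in the cases with at least one $(1,3)$ term is that the coefficients of $f,g,p,q$ on the right-hand side land in an ideal of $\Bbbk[x,y,u,v]$ generated by at most three elements (linear forms and/or the $\hat g_i,\hat h_i$), and then Krull's height theorem rules out $x^2,y^2,u^2,v^2$ all lying in such an ideal. You have not identified this step.

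The genuine gap is the all-$(2,2)$ case. Your proposed move---specialize three of $f,g,p,q$ to zero---does not yield a contradiction, since e.g.\ $x^2f$ has strength $1$. The paper's idea goes the other way: first view both sides as polynomials in $x,y,u,v$ over $\Bbbk[f,g,p,q]$ to see that $G_1,H_1,G_2,H_2,G_3,H_3$ must span all of $\langle f,g,p,q\rangle$, so four of them (call them $F,G,P,Q$) are a basis; then set $x=y=u=v=0$ to obtain $G_1H_1+G_2H_2+G_3H_3=0$ as an identity in $\Bbbk[f,g,p,q]$, which in the new coordinates reads $FG+PQ+G_3H_3=0$ or $FG+PH_2+QH_3=0$. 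Both are impossible (the first because $FG+PQ$ is irreducible, the second by setting $P=Q=0$). This specialization of the \emph{degree-one} variables, together with the irreducibility of $FG+PQ$, is the missing idea your plan would need to complete.
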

\begin{proof}
The polynomial $x^2f+y^2g+u^2p+v^2q$ has strength $\leq 4$. We need to show that $x^2f+y^2g+u^2p+v^2q$ has no strength decomposition of length $3$. This gives us four cases.  We will use the following notation. For $i\in\{1,2,3\}$, let $x_i,g_i,h_i,q_i\in \Bbbk[x,y,u,v,f,g,p,q]$ be homogeneous polynomials of degrees $1,2,2,3$, respectively. Let $R = \Bbbk[x,y,u,v]$. Then, $x_i \in R$ and
\begin{align*}
g_i&=a_{i,1}f+a_{i,2}g+a_{i,3}p+a_{i,4}q+\hat{g}_i,\\
h_i&=b_{i,1}f+b_{i,2}g+b_{i,3}p+b_{i,4}q+\hat{h}_i,\\
q_i&=c_{i,1}f+c_{i,2}g+c_{i,3}p+c_{i,4}q
\end{align*}
for some $a_{i,j},b_{i,j}\in\Bbbk$ and $c_{i,j},\hat{g}_i,\hat{h}_i\in R$. Write
\[
G_i := a_{i,1}f+a_{i,2}g+a_{i,3}p+a_{i,4}q\quad\mbox{ and }\quad H_i:=b_{i,1}f+b_{i,2}g+b_{i,3}p+b_{i,4}q.
\]

\textit{Case a.}
We show that 
\begin{equation}\label{eq:case_a}
x^2f+y^2g+u^2p+v^2q\neq x_1q_1+x_2q_2+x_3q_3.
\end{equation}
View both sides as polynomials in $R[f,g,p,q]$. So on the left-hand side, the coefficients of $f,g,p,q$ are $x^2,y^2,u^2,v^2$. Recall that $x_i\in R$ and 
\[
q_i=c_{i,1}f+c_{i,2}g+c_{i,3}p+c_{i,4}q
\]
for some homogeneous $c_{i,j}\in R$ of degree $1$. Note that, on the right-hand side of \eqref{eq:case_a}, the coefficient $x_1c_{1,1}+x_2c_{2,1}+x_3c_{3,1}$ of $f$ is contained in the ideal $(x_1,x_2,x_3)\subseteq  R$. Assume by contradiction that in \eqref{eq:case_a} equality holds. This implies that $x^2\in (x_1,x_2,x_3)$ and then $x\in(x_1,x_2,x_3)$ since the ideal is prime. Similarly, $y,u,v\in(x_1,x_2,x_3)$. But this is impossible, as the four variables $x,y,u,v$ cannot all lie in an ideal generated by three linear forms. Hence \eqref{eq:case_a} is indeed an inequality.

\textit{Case b.}
We show that 
\begin{equation}\label{eq:case_b}
x^2f+y^2g+u^2p+v^2q\neq x_1q_1+x_2q_2+g_1h_1.
\end{equation}
In the notation introduced above, if $G_1H_1\neq0$, then we see that the coefficient of one of the monomials $f^2,fg,\ldots,pq,q^2$ in the right-hand-side of \eqref{eq:case_b} is nonzero. Therefore, \eqref{eq:case_b} holds in this case. If $G_1=0$, we see that the coefficients of $f,g,p,q$ on the right-hand side of \eqref{eq:case_b} are contained in the ideal $(x_1,x_2,\hat{g}_1)\subseteq R$. Similarly, if $H_1=0$, the coefficients of $f,g,p,q$ on the right-hand side of \eqref{eq:case_b} are contained in the ideal $(x_1,x_2,\hat{h}_1)\subseteq R$. In both cases, this is impossible as these ideals cannot contain all the powers $x^2,y^2,u^2,v^2$ by Krull's height theorem. Hence, $G_1H_1 \neq 0$ and then \eqref{eq:case_b} holds.

\textit{Case c.}
We show that 
\begin{equation}\label{eq:case_c}
x^2f+y^2g+u^2p+v^2q\neq x_1q_1+g_1h_1+g_2h_2.
\end{equation}
Let $a_{i,j},b_{i,j}\in\Bbbk$, $c_{i,j},\hat{g}_i,\hat{h}_i\in R$ and $G_i,H_i$ be as before. Assume by contradiction that \eqref{eq:case_c} is instead an equality and set $x,y,u,v=0$. Then we get $0=G_1H_1+G_2H_2$. It follows that after reordering and scaling, we have $(G_2,H_2)=(G_1,-H_1)$ or $G_1=G_2=0$. In the first case, we find that the coefficients of $f,g,h,q$ on the right-hand side of \eqref{eq:case_c} are contained in $(x_1,\hat{g}_1-\hat{g}_2,\hat{h}_1+\hat{h}_2)$. In the second case, we find that these coefficients are contained in $(x_1,\hat{g}_1,\hat{g}_2)$. Both these cases are impossible since $x^2,y^2,u^2,v^2$ cannot all be contained in such ideals by Krull's height theorem. Hence \eqref{eq:case_c} holds.

\textit{Case d.}
We show that 
\begin{equation}\label{eq:case_d}
x^2f+y^2g+u^2p+v^2q\neq g_1h_1+g_2h_2+g_3h_3.
\end{equation}
Let $a_{i,j},b_{i,j}\in\Bbbk$, $\hat{g}_i,\hat{h}_i\in R$ and $G_i,H_i$ be as before. Assume by contradiction that \eqref{eq:case_d} is instead an equality. First, consider both sides of \eqref{eq:case_d} as polynomials in $x,y,u,v$ with coefficients in $\Bbbk[f,g,p,q]$. Then the coefficients of $x^2,y^2,u^2,v^2$ on the right-hand side are contained in the span of $G_1,H_1,G_2,H_2,G_3,H_3$. As these coefficients are $f,g,p,q$ on the left-hand side, we see that this span must be $4$-dimensional. So among $G_1,H_1,G_2,H_2,G_3,H_3$ there must be four linearly independent forms. After reordering, we may assume that these forms are either $G_1,H_1,G_2,H_2$ or $G_1,H_1,G_2,G_3$. In both cases, we call these forms $F,G,P,Q$ and note that the remaining two forms are also forms in $F,G,P,Q$. Now, we set $x,y,u,v=0$. We get the equation $0=G_1H_1+G_2H_2+G_3H_3$. So we see that either $FG+PQ+G_3H_3=0$ or $FG+PH_2+QH_3=0$. Both of these equations have no solutions: indeed, for the first, $FG+PQ$ is irreducible; for the second, we see that the equality $FG = -PH_2-QH_3$ cannot hold by setting $P=Q=0$. 
\end{proof}

\section{The example}\label{sec:example}

We are now ready to explain our example which gives a negative answer to Question \ref{quest}. Assume that $\Bbbk$ is algebraically closed. For any $t \in \Bbbk\setminus\{0\}$, consider 
\begin{equation}\label{strengthborder}
h(t):=\frac{1}{t}\left((x^2+tg)(y^2+tf)-(u^2-tq)(v^2-tp)-(xy+uv)(xy-uv)\right)
\end{equation}
where $x,y,u,v$ and $f,g,p,q$ are forms in $n$ variables of degrees $1$ and $2$, respectively. One sees that
\begin{equation}\label{eq:polynomial_p}
h :=\lim_{t\to 0}h(t)=x^2f+y^2g+u^2p+v^2q
\end{equation}
is a limit of strength-$\leq3$ polynomials and has itself strength $\leq 4$. We use the machinery of polynomial functors from Section~\ref{sec:polyfunctor} to show that, if $n \gg 0$, the polynomial $h$ indeed has strength $4$ for some choices of $x,y,u,v,f,g,p,q$.

Consider the following polynomial transformations:
\[\begin{array}{rlccc}
\alpha_1\colon&(S^1)^{\oplus 3}\oplus (S^3)^{\oplus 3}&\to& S^4,&(x_1,x_2,x_3,q_1,q_2,q_3)\mapsto x_1q_1+x_2q_2+x_3q_3,\\
\alpha_2\colon&(S^1)^{\oplus 2}\oplus (S^3)^{\oplus 2}\oplus S^2\oplus S^2&\to& S^4,&(x_1,x_2,q_1,q_2,g_1,h_1)\mapsto x_1q_1+x_2q_2+g_1h_1,\\
\alpha_3\colon&(S^1)\oplus (S^3)\oplus(S^2)^{\oplus 2}\oplus(S^2)^{\oplus 2}&\to& S^4,&(x_1,q_1,g_1,g_2,h_1,h_2)\mapsto x_1q_1+g_1h_2+g_2h_2,\\
\alpha_4\colon&(S^2)^{\oplus 3}\oplus(S^2)^{\oplus 3}&\to& S^4,&(g_1,g_2,g_3,h_1,h_2,h_3)\mapsto g_1h_1+g_2h_2+g_3h_3,\\
\beta\colon&(S^1)^{\oplus 4}\oplus (S^2)^{\oplus 4}&\to& S^4,&(x,y,u,v,f,g,p,q)\mapsto x^2f+y^2g+u^2p+v^2q. 
\end{array}\]
The image $\im(\beta)$ form the set of polynomials as in \eqref{eq:polynomial_p} over all $x,y,u,v,f,g,p,q$. Notice that $\bigcup_{i=1}^4\im(\alpha_i)$ consists of all forms of degree $4$ with strength $\leq 3$. By the discussion in Section~\ref{sec:polyfunctor}, in order to prove that $h$ has strength $>3$ for some choice of $x,y,u,v,f,g,p,q$, it suffices to prove that $\beta$ does not factor via $\alpha_i$ for any $i\in\{1,2,3,4\}$. This is essentially the statement of Theorem~\ref{thm:strength=4}.

\begin{lemma}\label{lm:case}
(a)
There is no $\gamma\colon (S^1)^{\oplus 4}\oplus (S^2)^{\oplus 4}\to (S^1)^{\oplus 3}\oplus (S^3)^{\oplus 3}$ such that $\beta=\alpha_1\circ\gamma$.

(b)
There is no $\gamma\colon (S^1)^{\oplus 4}\oplus (S^2)^{\oplus 4}\to (S^1)^{\oplus 2}\oplus (S^3)^{\oplus 2}\oplus S^2\oplus S^2$ such that $\beta=\alpha_2\circ\gamma$.

(c)
There is no $\gamma\colon (S^1)^{\oplus 4}\oplus (S^2)^{\oplus 4}\to (S^1)\oplus (S^3)\oplus(S^2)^{\oplus 2}\oplus(S^2)^{\oplus 2}$ such that $\beta=\alpha_3\circ\gamma$.

(d)
There is no $\gamma\colon (S^1)^{\oplus 4}\oplus (S^2)^{\oplus 4}\to (S^2)^{\oplus 3}\oplus(S^2)^{\oplus 3}$ such that $\beta=\alpha_4\circ\gamma$.
\end{lemma}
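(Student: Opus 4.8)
The plan is to reduce all four parts directly to the corresponding four cases in the proof of Theorem~\ref{thm:strength=4}. The point is that a factorization $\beta=\alpha_i\circ\gamma$ is, after unwinding definitions, nothing but one of the four polynomial identities that were already shown there to be impossible.

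First I would make explicit what a polynomial transformation $\gamma$ from $(S^1)^{\oplus 4}\oplus (S^2)^{\oplus 4}$ into the source of $\alpha_i$ is. By Remark~\ref{remark:polytrans_onetoone}, giving such a $\gamma$ amounts to giving a tuple of homogeneous polynomials in the ring $\Bbbk[x,y,u,v,f,g,p,q]$ in which $x,y,u,v$ are formal variables of degree $1$ and $f,g,p,q$ are formal variables of degree $2$: one polynomial for each summand of the source of $\alpha_i$, homogeneous of the degree of that summand. The grading then forces the shapes of these components: a degree-$1$ component is a linear form in $x,y,u,v$ alone, so it lies in $R:=\Bbbk[x,y,u,v]$; a degree-$2$ component is a $\Bbbk$-linear combination of $f,g,p,q$ plus an element of $R_2$; and a degree-$3$ component is an $R_1$-linear combination of $f,g,p,q$ plus an element of $R_3$. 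These are precisely the shapes of the forms $x_i$, $g_i$, $h_i$, $q_i$ appearing in the proof of Theorem~\ref{thm:strength=4}.

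Next, by Remark~\ref{remark:polytrans_composition} together with the bijection of Remark~\ref{remark:polytrans_onetoone} applied to transformations into $S^4$, the equality $\beta=\alpha_i\circ\gamma$ is equivalent to the statement that the quartic $x^2f+y^2g+u^2p+v^2q$ equals the polynomial obtained by substituting the components of $\gamma$ into the defining polynomial of $\alpha_i$. For $i=1,2,3,4$, and with the components of $\gamma$ renamed as $x_i,q_i,g_i,h_i$ as above, this is exactly the assertion that \eqref{eq:case_a}, \eqref{eq:case_b}, \eqref{eq:case_c}, respectively \eqref{eq:case_d}, holds with equality; each was shown to be false in Cases a, b, c, d of the proof of Theorem~\ref{thm:strength=4}. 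Hence no such $\gamma$ exists, proving (a)--(d).

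There is no genuinely hard step here: all the real work is contained in Theorem~\ref{thm:strength=4}. The only thing requiring care — and the single mild obstacle — is the bookkeeping in the second paragraph: one must check that the grading on the formal variables of the source functors forces the components of $\gamma$ to take exactly the same form as the factors permitted in a length-$3$ strength decomposition of a quartic, so that the reduction to Theorem~\ref{thm:strength=4} loses no generality and the four factorization problems match the four cases bijectively.
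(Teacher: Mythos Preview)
Your proposal is correct and is essentially the paper's own proof: the paper also invokes Remarks~\ref{remark:polytrans_onetoone} and~\ref{remark:polytrans_composition} to translate a hypothetical factorization $\beta=\alpha_i\circ\gamma$ into a strength-$\leq3$ decomposition of $x^2f+y^2g+u^2p+v^2q$ in $\Bbbk[x,y,u,v,f,g,p,q]$, and then cites Theorem~\ref{thm:strength=4}. You have merely made the bookkeeping about the shapes of the components of $\gamma$ more explicit than the paper does.
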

\begin{proof}
By Remarks~\ref{remark:polytrans_onetoone} and~\ref{remark:polytrans_composition}, if any of (a)-(d) would not hold, then the polynomial $x^2f+y^2g+u^2p+v^2q\in \Bbbk[x,y,u,v,f,g,p,q]$ would have strength $\leq3$. Hence Theorem~\ref{thm:strength=4} implies the lemma.
\end{proof}

By Lemma \ref{lm:case}, Proposition \ref{mainpolyfunctor}, Lemma \ref{lm:part_exists}, Lemma \ref{image} and Remark~\ref{increasingn}, we conclude that the form~$h$ in~\eqref{eq:polynomial_p} has strength $4$ for some choice of $x,y,u,v,f,g,p,q$. So the set of forms of degree $4$ in $n\gg0$ variables with strength $\leq 3$ is not closed. 

\begin{remark}
In the proof of Theorem \ref{thm:strength=4}, the symbols $x,y,u,v,f,g,h,q$ are treated as {\it variables}, the first four of degree $1$ and the last four of degree $2$. We think of $(x,y,u,v,f,g,h,q)$ as a tuple of polynomial series that is part of a system of variables. So this proof strategy {\it does not} directly apply for a fixed value of $n$ (i.e., in a fixed ambient dimension), and so does not apply towards answering Question \ref{quest1}. More specifically, once $n$ is fixed, the symbols $f, g, h, q$ would not be formal variables of degree $2$ anymore, but rather degree $2$ forms in $n$ variables. 
\end{remark}

\begin{remark}
Similar to all problems concerning additive decompositions of forms, one of the biggest challenges is to provide efficient methods to get lower bounds on the strength of a given form $f$. As far as we know, the only general method is observed in \cite[Remark 1.1]{ananyan2020strength}: if $\mathrm{Sing}(f)$ is the singular locus of the hypersurface $\{f=0\}$ and  $\codim(\Sing(f))\geq 2k+1$, then $\str(f)\geq k+1$. Suppose we have a flat family of hypersurfaces $f(t) \rightarrow f(0)=f$ such that $\str(f(t))\leq k$ for all $t\in \mathbb A^1_{\Bbbk}\setminus\{0\}$.  By the aforementioned bound, one has $\codim(\Sing(f(t)))\leq 2k$. Note that, in such a situation, $\codim(\Sing(f(t)))\geq \codim(\Sing(f))$. Therefore, the Ananyan-Hochster lower bound cannot be better than~$k$. In our example, $\codim(\Sing(h))\leq 4$ and so this lower bound is not larger than $2$.
\end{remark}

\end{document}